\documentclass[12pt]{article}
\usepackage[colorlinks,
            linkcolor=blue,
            anchorcolor=green,
            citecolor=blue
            ]{hyperref}
\usepackage{mathrsfs,amssymb,amsfonts}
\usepackage{amsmath,amssymb,latexsym,color}
\usepackage[mathscr]{eucal}
\usepackage[numbers,sort&compress]{natbib}
\usepackage{graphicx}
\usepackage{lipsum}
\usepackage{enumerate}
\def\thefootnote{\fnsymbol{footnote}}
\newtheorem{thm}{Theorem}[section]

\newtheorem{lemma}[thm]{Lemma}

\newtheorem{example}[thm]{Example}

\newtheorem{problem}[thm]{Problem}

\newtheorem{remark}[thm]{Remark}
\newtheorem{Notation}[thm]{Notation}

\newtheorem{con}[thm]{Conjecture}

\newcommand{\proof}{{\it Proof.\quad}}
\newcommand{\qed}{\hfill\Box\medskip}
\renewcommand{\thefootnote}{\arabic{footnote}}

\newcommand{\di}{{\rm d}}
\newcommand{\dm}{{\rm dim_m}}

\begin{document}
\renewcommand{\abovewithdelims}[2]{
\genfrac{[}{]}{0pt}{}{#1}{#2}}

\title{\bf On the mixed metric dimension of $2$-connected graphs
}

\author{{Shi Chen and Xuanlong Ma\footnote{Corresponding author}}
\medskip
\\
{\small\em School of Science, Xi'an Petroleum University, Xi'an 710065, P. R. China}
\medskip
}

 \date{}
 \maketitle
\newcommand\blfootnote[1]{%
\begingroup
\renewcommand\thefootnote{}\footnote{#1}%
\addtocounter{footnote}{-1}%
\endgroup
}
\begin{abstract}
We show that for a $2$-connected graph $G$ which is not a cycle, the mixed metric dimension of $G$ is at most $2c(G)$, where $c(G)$ is the cyclomatic number of $G$.
As an immediate application, we prove a conjecture
proposed by Sedlar and \v{S}krekovski (2021).

\medskip
\noindent {\em Keywords:} Mixed metric dimension; Cyclomatic number; $2$-connected graph

\medskip
\noindent {\em 2020 MSC:} 05C12; 05C38
\end{abstract}

\blfootnote{{\em E-mail addresses:} nuoyoushi@gmail.com (Chen), xuanlma@xsyu.edu.cn (Ma)}

\section{Introduction}

Every graph considered in our paper is finite, simple, and connected.
Let $G$ be a graph. Denote by $V(G)$ and $E(G)$ the vertex set and edge set of $G$, respectively.
Let $u$ and $v$ be two vertices of $G$. The distance between $u$ and $v$ is denoted by $\di_G(u,v)$, or simply by $\di(u,v)$ when no confusion can arise.
For distinct $x,y\in V(G)$, we say a vertex $v\in V(G)$
{\em distinguishes} $x$ and $y$ whenever
$
\di(v,x)\ne \di(v,y).
$
A subset $S \subseteq V(G)$ is called a {\em metric generator} of $G$ if for every
pair of distinct vertices $x,y\in V(G)$, there exists a vertex in $S$ such that this vertex distinguishes $x$ and $y$.
A metric generator of $G$ with minimum cardinality is called
a {\em metric basis}
of $G$, and the cardinality of a metric basis is referred to as the {\em metric dimension} of $G$.
The concept of metric dimension was independently introduced in the 1970s by Harary and Melter \cite{Ham} and Slater \cite{Sl}. The aim is to distinguish vertices of a graph using distances.
The metric dimension of a graph has a wide range of applications, including robot navigation in networks \cite{Kh}, chemistry \cite{CPZ}, pattern recognition and image processing \cite{Me}, and the game of Mastermind \cite{CA}.

As a further variation of the classical notion of a metric generator, the concept of an edge metric generator was introduced in \cite{KT} to distinguish the edges of a graph by means of distances from a prescribed set of vertices. Although a metric basis of a connected graph $G$ uniquely determines every vertex of $G$ through its distance representation, it does not necessarily distinguish all edges of $G$. This observation motivates the following definitions.
For a vertex $v\in V(G)$ and an edge $e=\{u,w\}\in E(G)$, the distance between $v$ and $e$ is defined by
$
\di(v,e)=\min\{\di(v,u),\di(v,w)\}.
$
A vertex $x\in V(G)$ is said to distinguish two distinct edges $e_1,e_2\in E(G)$ on condition that
$
\di(x,e_1)\ne \di(x,e_2).
$
A set $S\subseteq V(G)$ is called an {\em edge metric generator} for $G$ provided that every pair of distinct edges of $G$ is distinguished by some vertex of $S$. An edge metric generator of minimum cardinality is called an {\em edge metric basis} of $G$, and its cardinality is called the {\em edge metric dimension} of $G$.

Given a graph $G$, we write
$$
O(G)=V(G)\cup E(G).
$$
A vertex $v\in V(G)$ is said to distinguish two distinct elements $x,y\in O(G)$ (vertices or edges) if
$
\di(v,x)\ne \di(v,y).
$
A set $S\subseteq V(G)$ is called a {\em mixed metric generator} for $G$ if for every pair of distinct elements $x,y\in O(G)$, there exists a vertex of $S$ such that
it can distinguish $x,y$.
The cardinality of a mixed metric generator with minimum size, denoted by $\dm(G)$, is called the {\em mixed metric dimension} of $G$.
The mixed metric dimension of a graph was first introduced in \cite{Kek}.
For a comprehensive overview of the variants of metric dimension and their applications to network detection, graph isomorphism, coin-weighing problems, and machine learning, we refer the reader to the two surveys \cite{Ku,Ti}.

In \cite{Kek},
the authors determined the mixed metric dimension of several families of graphs, including cycles, complete bipartite graphs, trees, and grid graphs.
They also showed that the problem of determining the
mixed metric dimension of a graph is NP-hard in the general case.
A {\em leaf} of $G$ is a vertex with degree $1$.
Equivalently, a leaf of $G$ is a vertex of $G$ that is incident to exactly one edge. Denote by $\ell(G)$
the number of all leaves of $G$.
The {\em cyclomatic number} of $G$, denoted by $c(G)$, is
the number of independent cycles of $G$. Namely $$c(G)=|E(G)|-|V(G)|+1.$$
In \cite{Sed21},
Sedlar and \v{S}krekovski obtained the exact value of the mixed metric dimension of a unicyclic graph and they proposed the following conjecture:

\begin{con}\label{conj}{\rm (\cite[Conjecture 13]{Sed21})}
Let $G$ be a connected graph which is not a cycle.
Then $\dm(G)\le \ell(G)+2c(G)$.
\end{con}

Conjecture~\ref{conj} holds for trees and $3$-connected graphs, see \cite{Sed21}.
In \cite{Sed}, the authors showed that Conjecture~\ref{conj} holds for all Theta graphs. Recently, Chakraborty, Foucaud, and  Hakanen \cite{Ch} proved that Conjecture~\ref{conj} holds for a connected graph which is not a tree such that $\ell(G)\ge 1$, and a connected graph $G$ with $\ell(G)=0$ and has a cut-vertex.
In \cite{Hua}, the authors gave some graphs satisfying  Conjecture~\ref{conj}. 
In this paper, we will prove the following theorem:

\begin{thm}\label{other}
Let $G$ be a $2$-connected graph which is not a cycle.
Then $$\dm(G)\le 2c(G).$$
\end{thm}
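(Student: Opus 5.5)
We proceed via an ear decomposition of $G$. Since $G$ is $2$-connected and not a cycle, Whitney's theorem gives an open ear decomposition $G=C\cup P_1\cup\cdots\cup P_k$. Here $C$ is a cycle and each $P_i$ is a path whose two distinct endpoints lie on earlier ears and whose interior vertices are new. Each ear raises the cyclomatic number by exactly one, so $c(G)=k+1$ and $k\ge 1$. The target is therefore a mixed metric generator $S$ with $|S|\le 2(k+1)$, which suggests budgeting two vertices per ear, counting $C$ as an ear.

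\textbf{Choice of decomposition.} We first normalize the decomposition. We take $C$ to be a longest cycle, or at least a cycle through a vertex of degree $\ge 3$, and order the ears so that each ear is as long as possible given the previous ones. We then place vertices as follows.
\begin{itemize}
\item On each ear $P_i$ with interior vertices, we put into $S$ the interior vertex adjacent to each endpoint. This gives two vertices for a long ear, and a single vertex counted twice when the ear has one interior vertex.
\item For a trivial ear (a chord), we instead put its two endpoints into $S$ when they are not already present.
\item On $C$, we choose two vertices near the attachment points of $P_1$.
\end{itemize}
The resulting $S$ has size at most $2(k+1)=2c(G)$. The guiding principle comes from the known results for cycles and Theta graphs, $\dm(C_n)=3$ and Theta graphs satisfy the bound. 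Vertices of $S$ positioned right next to every vertex of degree $\ge 3$, on every incident thread, are what separate an edge from its endpoints and separate the edges at a branching vertex.

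\textbf{Verification.} We show $S$ is a mixed metric generator using the standard local criteria from the literature on mixed metric dimension. The key necessary conditions are that every vertex $v$ must be distinguished from each incident edge $vw$, and every pair of edges sharing a vertex must be distinguished. For $v$ versus an incident edge $vw$, it suffices to find $s\in S$ with $\di(s,w)<\di(s,v)$, that is, $s$ lies "behind" $w$ as seen from $v$.

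For general pairs, we argue thread by thread. Consider the maximal paths (threads) between branching vertices in the final graph. Each thread receives, from our placement, a vertex of $S$ at or adjacent to each end. The vertices of $S$ at the two ends of a thread determine positions along that thread uniquely, since they see it from opposite sides. Elements in different threads are separated by comparing their distances to the end-markers of the two threads.

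\textbf{Main obstacle.} The hardest step is making the counting and the geometry compatible. Short ears, such as chords or ears of length $2$, and ears attached to interior vertices of earlier ears refine threads. A naive two-per-ear placement may then leave a thread with a marker on only one side. It can also cause distance ties, because a vertex and an edge at equal distance from all markers can arise when a geodesic runs symmetrically around a cycle.

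To handle this, we first try an induction on $k$. The base case $k=1$ is the Theta graph, where the bound is known. For the inductive step, we add the last ear $P_k$ and show that adjoining at most two new vertices to a generator of $G-\mathrm{int}(P_k)$ works. When the old generator's vertices on the thread being subdivided lose their role, we relocate them to the new branching points. A parity or symmetry case analysis, on whether the cycles through $P_k$ are even or odd, is then needed to exclude ties between an element of $P_k$ and an element elsewhere. Checking this case analysis carefully is where we expect most of the work.
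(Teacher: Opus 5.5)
Your proposal has a genuine gap, and the concrete placement it describes is actually wrong.

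Let $G$ consist of the edge $\{u,v\}$ together with two internally disjoint $u,v$-paths $(u,q_1,\ldots,q_{L-1},v)$ and $P=(u,p_1,\ldots,p_{L-1},v)$, with $L\ge 9$. This graph is $2$-connected, is not a cycle, and has $c(G)=2$. Whichever starting cycle you choose (the longest one, with $\{u,v\}$ as a chord, or $\{u,v\}$ plus one of the paths), every vertex your rules put into $S$ sits at or next to a branch vertex. So $S\subseteq N[u]\cup N[v]=\{u,v,p_1,q_1,p_{L-1},q_{L-1}\}$.

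For each of these six vertices $s$ one checks $\di(s,p_3)=\di(s,p_2)+1$; the values of $\di(s,p_2)$ are $2,3,1,3,4,4$ in the order listed. Hence $\di(s,\{p_2,p_3\})=\di(s,p_2)$, so the vertex $p_2$ and the edge $\{p_2,p_3\}$ are not distinguished. Your $S$ thus fails even the local necessary condition you state.

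The cause is that $P$ has length $L\ge \di_{G-P^0}(u,v)=1$, so markers at both ends of $P$ see its interior through $u$ and $v$ from the same side. Your claims that the end-markers of a thread fix positions along it and separate its elements from other threads hold only for short threads ($l<\di_{G-P^0}(u,v)$). That is exactly what cannot be assumed, and it is why the paper puts the midpoint $p_{\lceil l/2\rceil}$ of such a thread into the generator.

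The fallback induction on ears does not close the gap, because its inductive step is only asserted. The last ear $P_k$ is dictated by the decomposition and may be short, i.e.\ shorter than $\di_H(u,v)$ for $H=G-\mathrm{int}(P_k)$. Then $H$ is not isometric in $G$, distances among old elements change, and a mixed metric generator of $H$ need no longer distinguish pairs in $O(H)$. ``Relocating'' markers and an unspecified parity analysis do not address this.

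The missing idea is the paper's dichotomy on threads.

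\textbf{Some thread is not short.} Suppose some maximal thread $P=(u=p_0,\ldots,p_l=v)$ has $l\ge \di_{G-P^0}(u,v)$. Then $H=G-P^0$ is isometric in $G$ and $c(H)=c(G)-1$. For any mixed metric generator $\mathcal{R}$ of $H$, the set $\mathcal{R}\cup\{u,p_{\lceil l/2\rceil}\}$ works for $G$, with $u,v$ labelled so that some $r\in\mathcal{R}$ has $\di_H(r,u)\ge \di_H(r,v)$ (Lemma~\ref{lem-4}). This $H$ may have a cut vertex, so the thread you want to peel off need not be the last ear of any open ear decomposition; or $H$ may be a cycle. The paper settles these cases with the Chakraborty--Foucaud--Hakanen bound for leafless graphs with a cut vertex and with the Theta-graph bound, and otherwise inducts.

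\textbf{Every thread is short.} Then no induction is needed. Shortness, via Lemmas~\ref{lem-1} and~\ref{lem-2}, is exactly what makes the two ends of a thread separate its elements from everything else. So the branch set $\mathcal{B}$ is itself a mixed metric generator (Lemma~\ref{lem-5}), and $|\mathcal{B}|\le\sum_v(\deg v-2)=2c(G)-2$.
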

As an immediate application of Theorem~\ref{other}, we confirm Conjecture~\ref{conj}.

\section{Preliminaries}
Let $G$ be a graph.
A {\em thread} in a graph $G$ is a path whose internal vertices all have degree $2$ in $G$.
Let $P=(p_0,p_1,\ldots,p_l)$ be a thread of $G$.
We use $G-P^0$ to denote the subgraph of $G$ that is obtained by removing the internal vertices of this thread $P$ and keeping the endpoints $p_0$ and $p_l$.
For convenience, throughout this paper, we denote the vertex $p_i$ of $P$ by $[i,i]$, and the edge $\{p_{j}, p_{j+1}\}$ of $P$ by $[j,j+1]$, where $0\le i \le l$ and $0\le j \le l-1$.
Namely, every element of $P$ can be expressed as
$$
[a,b],~~b-a\in \{0,1\},~~0\le a \le b \le l.
$$
If $G-P^0$ is connected and $l<\di_{G-P^0}(p_0,p_l)$, then
$P$ is called a {\em short thread} of $G$. If $\deg(p_0)\ge 3$ and $\deg(p_l)\ge 3$, then this thread $P$ is called
{\em maximal}. By \cite{Wh}, a $2$-connected graph which is not a cycle can be decomposed into distinct maximal threads.
Write
$$
\mathcal{B}=\{v\in V(G): \deg(v)\ge 3\}.
$$

\begin{lemma}\label{lem-1}
Let $G$ be a graph with a $u,v$-thread $P$ of length $l$. If there exists $z\notin V(P)$ such that
$\di(u,z)+\di(v,z)\le l$, then every shortest path from
$u$ to $z$ does not pass through any internal vertex of
$P$.
\end{lemma}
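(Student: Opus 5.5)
We argue by contradiction, using only the structure of a thread. Suppose some shortest $u$--$z$ path $Q$ passes through an internal vertex $p_i$ of $P=(p_0=u,p_1,\ldots,p_l=v)$, where $0<i<l$. Every internal vertex of $P$ has degree $2$, and $z\notin V(P)$. So $Q$ can enter and leave the interior of $P$ only through the endpoints $u$ and $v$. Once $Q$ is at $p_i$, it must continue along $P$ in one direction until it reaches $u$ or $v$, since those are the only exits toward $z$.

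Because $Q$ is a path starting at $u$, it cannot return to $u$. Hence after visiting $p_i$ it must leave $P$ through $v$. The portion of $Q$ before $p_i$ also lies on $P$: starting from $u$, the path can only walk into the interior through $p_1$, moving forward. (Alternatively it could reach $p_i$ from the $v$-side, but then it would pass through $v$ twice, which is impossible for a path.) Consequently $Q$ contains the entire subpath $u=p_0,p_1,\ldots,p_l=v$ and then continues from $v$ to $z$. This gives
$$\di(u,z)=|Q| = l+\di(v,z),$$
where the final segment from $v$ to $z$ is itself shortest, as a subpath of a shortest path.

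Since $z\ne v$ (as $z\notin V(P)$), we have $\di(v,z)\ge 1$, and therefore $\di(u,z)\ge l+1$. But the hypothesis gives $\di(u,z)\le l-\di(v,z)\le l-1$, a contradiction. Hence no shortest $u$--$z$ path meets the interior of $P$.

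The only delicate step is the claim that a path from $u$ through an interior vertex of $P$ must traverse all of $P$ and exit at $v$. This needs care in two places: the degree-$2$ condition, and the requirement that the vertices of a path are distinct. In particular, the edge case where $P$ has length $1$ has no internal vertices, so the lemma holds trivially there.
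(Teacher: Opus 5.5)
Your proposal is correct and follows the paper's argument. In both, one assumes a shortest $u$--$z$ path meets the interior of $P$, derives $\di(u,z)=l+\di(v,z)$, and uses $\di(v,z)\ge 1$ to contradict $\di(u,z)+\di(v,z)\le l$. You additionally spell out, via the degree-$2$ condition and distinctness of path vertices, why such a path must traverse all of $P$ and exit at $v$, a step the paper simply calls ``clearly''.
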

\proof
Suppose, for a contradiction, that there exists a shortest path from $u$ to $z$ pass through some internal vertex of
$P$. Then clearly, $\di(u,z)=l+\di(v,z)$. It follows that
$l+2\di(v,z)=\di(u,z)+\di(v,z)\le l$, which is impossible as $\di(v,z)\ne 0$.
$\qed$

\begin{lemma}\label{lem-2}
Let $G$ be a graph with a $u,v$-thread $P$ of length $l$. If there exists an edge $e=\{a,b\}\in E(G)\setminus E(P)$ such that
$\di(u,e)+\di(v,e)\le l-1$, then there exists a $u,v$-path $W$ in $G-P^0$ such that $|W|\le l$.
\end{lemma}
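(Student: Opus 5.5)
The idea is to splice together shortest paths from $u$ and from $v$ to the edge $e$, and to check that the resulting $u,v$-walk stays off the interior of $P$.

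First note that $e\notin E(P)$ means neither $a$ nor $b$ is an internal vertex of $P$. Every edge at an internal vertex $p_i$ ($0<i<l$) is an edge of $P$, since $\deg(p_i)=2$. So $a,b\in V(G)\setminus\{p_1,\dots,p_{l-1}\}$.

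Choose endpoints $x,y\in\{a,b\}$ with
$$\di(u,x)=\di(u,e),\qquad \di(v,y)=\di(v,e),$$
and take shortest paths $Q_u$ from $u$ to $x$ and $Q_v$ from $v$ to $y$. Let $W'$ be the walk $Q_u$, then the edge $\{x,y\}$ if $x\ne y$, then $Q_v$ reversed. Its length is at most $\di(u,e)+\di(v,e)+1\le l$.

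The key step is to show that neither $Q_u$ nor $Q_v$ contains an internal vertex of $P$. We use the argument of Lemma~\ref{lem-1}, adapted to edges.

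\begin{itemize}
\item Suppose $Q_u$ passes through some $p_i$ with $0<i<l$. Its end $x$ is not internal to $P$, and the only exits from the interior are $p_0=u$ and $p_l=v$. A shortest path cannot revisit $u$, so it traverses the whole thread, reaches $v$, and continues to $x$. Hence $\di(u,x)=l+\di(v,x)$.
\item Since $\di(v,e)\le \di(v,x)$, this gives
$$\di(u,e)+\di(v,e)\ge l+\di(v,x)+\di(v,e)\ge l,$$
which contradicts the hypothesis $\di(u,e)+\di(v,e)\le l-1$.
\item The argument for $Q_v$ is symmetric.
\end{itemize}

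Since $a,b$ are not internal vertices of $P$, the edge $\{x,y\}$ is not in $P$ either. Thus $W'$ is a $u,v$-walk in $G-P^0$ of length at most $l$.

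It remains to extract a path $W$ from $W'$ by shortcutting repeated vertices. This only shortens it, so $|W|\le l$, and $W$ is still a $u,v$-path in $G-P^0$.

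One degenerate case needs checking: $u=v$ could make $W$ trivial. If $P$ is a thread with $u\ne v$ this does not arise. Even if $u=v$, a length-$0$ path still satisfies the statement, so no issue. I expect the only real care point to be the observation that a shortest path entering the interior of a thread must traverse the entire thread.
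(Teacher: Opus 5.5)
Your proof is correct and follows the paper's argument: you pick the endpoints of $e$ nearest to $u$ and to $v$, use $\di(u,x)=l+\di(v,x)$ to show that shortest paths to them avoid the interior of $P$, and splice them through $e$ to get length at most $l$. Your two extra checks make explicit what the paper's proof leaves implicit. The first is that $a,b$ are not internal vertices of $P$, which is needed for the ``traverse the whole thread'' step. The second is that the spliced walk can be shortcut to a genuine path.
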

\proof
Assume that $\di(u,e)=\di(u,u')$ and $\di(v,e)=\di(v,v')$, where $u',v'\in \{a,b\}$.

Suppose, for a contradiction, that there exists a shortest path from $u$ to $u'$ pass through some internal vertex of
$P$. Then we have that $\di(u,u')=l+\di(v,u')$.
Note that $\di(v,u')\ge \di(v,e)$.
It follows that
$$\di(u,e)+\di(v,e)=\di(u,u')+\di(v,v')=l+\di(v,u')+\di(v,v')\le l-1,$$
which is a contradiction. We conclude that every shortest path from $u$ to $u'$ do not pass through any internal vertex of
$P$. Similarly, every shortest path from $v$ to $v'$ do not pass through any internal vertex of $P$.

Take now a shortest path from $u$ to $u'$ and a shortest path from $v$ to $u'$. If $u'=v'$,
then we concatenate the two paths at
$u'$ to obtain a $u,v$-path, say $W$, which implies that
$W$ in $G-P^0$ and $|W|=\di(u,e)+\di(v,e)\le l-1< l$, as desired.
If $u'\ne v'$, then we obtain the $u,v$-path of $G-P^0$ has length $\di(u,e)+\di(v,e)+1\le l$, as wanted.
$\qed$

\begin{lemma}\label{lem-3}
Let $G$ be a $2$-connected graph without leaves.
Then $|\mathcal{B}|\le 2c(G)-2$.
\end{lemma}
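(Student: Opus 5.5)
The plan is a degree-counting argument via the handshake lemma; the inequality is essentially a restatement of $\sum_{v}\deg(v)=2|E(G)|$ together with the definition $c(G)=|E(G)|-|V(G)|+1$.

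First, I will check that every vertex of $G$ has degree at least $2$. A vertex of degree $0$ cannot occur because $G$ is connected (and not a single vertex, since it is $2$-connected). A vertex of degree $1$ is a leaf, which the hypothesis excludes. So $\deg(v)\ge 2$ for all $v\in V(G)$, and $\deg(v)\ge 3$ exactly for $v\in\mathcal{B}$.

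Next, I will split the degree sum according to $\mathcal{B}$. This gives
$$
2|E(G)|=\sum_{v\in V(G)}\deg(v)=\sum_{v\notin\mathcal{B}}\deg(v)+\sum_{v\in\mathcal{B}}\deg(v)\ge 2\bigl(|V(G)|-|\mathcal{B}|\bigr)+3|\mathcal{B}|=2|V(G)|+|\mathcal{B}|.
$$
Rearranging yields $|\mathcal{B}|\le 2|E(G)|-2|V(G)|=2\bigl(c(G)-1\bigr)=2c(G)-2$, as claimed.

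There is no real obstacle. The only point needing care is the degree lower bound in the first step, and it follows directly from the hypotheses. As a sanity check, when $G$ is a cycle we have $\mathcal{B}=\emptyset$ and $c(G)=1$, so the bound holds with equality. The $2$-connectivity is not used beyond guaranteeing connectivity and excluding trivial graphs.
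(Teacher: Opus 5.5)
Your proposal is correct and follows essentially the same route as the paper. The paper computes $\sum_{v\in V(G)}(\deg(v)-2)=2c(G)-2$ via the handshake lemma and bounds $|\mathcal{B}|$ by this sum; that is exactly your split of the degree sum over $\mathcal{B}$ and its complement, and both arguments rely on every vertex having degree at least $2$, which you justify explicitly.
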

\proof
It is well known that $\sum_{v\in V(G)}\deg(v)=2|E(G)|$.
Note that $G$ has no leaves.
We deduce that
$$
\sum_{v\in V(G)}(\deg(v)-2)=\sum_{v\in V(G)}\deg(v)-2|V(G)|
=2(|E(G)|-|V(G)|)=2c(G)-2.
$$
It follows that $|\mathcal{B}|\le \sum_{v\in V(G)}(\deg(v)-2)=2c(G)-2,$ as desired.
$\qed$

\begin{lemma}\label{lem-4}
Let $G$ be a graph with a thread $P=(p_0,p_1,\ldots,p_l)$, where $p_0=u$ and $p_l=v$. Suppose that $H=G-P^0$ is connected and $l\ge \di_H(u,v)$. Then
$$\dm(G)\le \dm(H)+2.$$
In particular, if $\mathcal{R}$ is a mixed metric generator of $H$, then $\mathcal{R}\cup \{u,w\}$ is a mixed metric generator of $G$, where $w=p_{\lceil l/2\rceil}$.
\end{lemma}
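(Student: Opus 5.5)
The plan is to show directly that $S=\mathcal{R}\cup\{u,w\}$ distinguishes every pair of distinct elements of $O(G)$. The inequality $\dm(G)\le \dm(H)+2$ then follows by taking $\mathcal{R}$ to be a mixed metric basis of $H$.

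\emph{Step 1: distances in $H$ are preserved.} Since $l\ge \di_H(u,v)$, any walk in $G$ that uses an internal vertex of $P$ must traverse all of $P$. We can replace that traversal by a shortest $u,v$-path of $H$, which has length at most $l$. Hence $\di_G(a,b)=\di_H(a,b)$ for all $a,b\in V(H)$, and by taking minima the same holds for vertex--edge distances inside $H$. Consequently any two distinct elements of $O(H)$ are distinguished in $G$ by some vertex of $\mathcal{R}$, exactly as in $H$.

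\emph{Step 2: two elements of $P$.} Every element $x$ of $P$ can be written $[a,b]$ with $b-a\in\{0,1\}$. Its distance to a vertex $z$ of $V(H)$ is $\min\{a+\di(u,z),\,(l-b)+\di(v,z)\}$. I will write down explicit formulas for $\di(u,[a,b])$ and $\di(w,[a,b])$. The distance from $u$ increases with $a$ up to a turning point near where the route through $H$ becomes shorter, and then decreases. Because $w=p_{\lceil l/2\rceil}$ sits at the middle of $P$ and $l\ge\di_H(u,v)$, the turning point for $u$ lies at or beyond $w$. Equal distances to $u$ therefore force the two elements to lie on opposite sides of that turning point, and on such elements $\di(w,\cdot)$ differs. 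For elements of the same type (vertex/vertex or edge/edge) this is a short parity and monotonicity check. The mixed vertex/edge case needs a little extra bookkeeping, using the fact that an edge has the same distance to $u$ as its nearer endpoint.

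\emph{Step 3: an element $x$ of $P$ with internal vertex or edge versus an element $y\in O(H)$.} This is the step I expect to be the main obstacle. Assume, for a contradiction, that $\di(u,x)=\di(u,y)$, $\di(w,x)=\di(w,y)$, and $\di(r,x)=\di(r,y)$ for all $r\in\mathcal{R}$. Since $y$ lies in $H$, every path from $w$ to $y$ leaves $P$ through $u$ or $v$. This gives
$$\di(w,y)=\min\{\lceil l/2\rceil+\di(u,y),\ \lfloor l/2\rfloor+\di(v,y)\}.$$
Now compare this with $\di(w,x)$, which is at most about $l/2$ because $x$ lies on $P$.

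If the minimum is attained through $u$, then $\di(w,y)\ge\lceil l/2\rceil+\di(u,x)$. This is essentially impossible, since $\di(w,x)+\di(u,x)$ is at most roughly $\lceil l/2\rceil$, and I will check the edge-case parity carefully.

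Otherwise the minimum is attained through $v$, giving $\di(u,y)+\di(v,y)\le \di(u,x)+\di(w,x)-\lfloor l/2\rfloor\le l$, or $\le l-1$ in the edge version. Lemma~\ref{lem-1} for a vertex $y$, or Lemma~\ref{lem-2} for an edge $y$, then shows that the shortest paths from $u$ and $v$ to $y$ avoid $P^0$. In this situation the distance from a vertex $r\in\mathcal{R}$ to $x$ goes through $u$ or $v$ and exceeds the distance to $y$ by a positive amount. To turn this into a contradiction I would first pick $r\in\mathcal{R}$ distinguishing $y$ from $u$ (or from $v$) in $H$, since $\mathcal{R}$ is a mixed generator of $H$. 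I would then verify that this $r$ satisfies $\di(r,x)\ne\di(r,y)$ by expanding $\di(r,x)=\min\{a+\di(u,r),\,(l-b)+\di(v,r)\}$ and using the triangle inequality through $y$. If this direct comparison leaves a residual subcase, I would handle it by splitting on whether $x$ lies in the half of $P$ nearer $u$ or nearer $v$; the choice $w=p_{\lceil l/2\rceil}$ should make the two halves behave symmetrically enough for this to close.
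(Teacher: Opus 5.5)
Your Step 2 has a genuine gap: $u$ and $w$ alone do \emph{not} separate the elements of $P$, so no turning-point or parity argument can close it. The distance from $u$ is not strictly monotone along $P$. Write $d=\di_H(u,v)$ and $m=\lceil l/2\rceil$. Then $\di(u,p_a)=\di(u,p_ap_{a+1})=a$ whenever $2a+1\le l+d$, and for $a\ge m$ also $\di(w,p_a)=\di(w,p_ap_{a+1})=a-m$. For example, $w=p_m$ and the edge $p_mp_{m+1}$ have equal distances to both $u$ and $w$ whenever $l$ is even, or $l$ is odd and $d\ge 2$. Same-type pairs fail too: for $d=1$ and $l=5$, the vertices $p_2$ and $p_4$ are both at distance $2$ from $u$ and $1$ from $w$.

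So a vertex of $\mathcal{R}$ is needed already inside $P$, and the missing idea is which one. The paper fixes $r\in\mathcal{R}$ with $\di_H(r,u)\ge\di_H(r,v)$. Then every element $[a,b]$ with $a\ge m$ satisfies $\di_G(r,[a,b])=\di_H(r,v)+l-b$. This separates exactly the pairs $p_a$, $p_ap_{a+1}$ that $u$ and $w$ confuse. The same $r$ also separates crossing pairs such as $p_2,p_4$ (Claim 1, Case 2, via $\di_H(r,u)-\di_H(r,v)+d>0$).

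The paper gets such an $r$ by choosing which end of $P$ to call $u$, and this choice cannot be dropped. So your plan of verifying $\mathcal{R}\cup\{u,w\}$ for the given labelling cannot succeed in general. Take $H=Q_3$, $u=000$, $v=111$ and $\mathcal{R}=\{000,100,010,001\}$. One checks that these four vertices give distinct distance vectors to all $8$ vertices and $12$ edges, and each is strictly closer to $u$ than to $v$. Attach a $u,v$-thread of length $4$. Then every vertex of $\mathcal{R}\cup\{u,p_2\}$ is equidistant from $p_2$ and the edge $p_2p_3$. Orienting $P$ first still yields $\dm(G)\le\dm(H)+2$, since every $r$ is at least as close to one end as to the other.

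The same point affects Step 3. There, the comparison with $w$ actually forces $l$ odd, $\di_H(v,y)=0$ and $x\in\{p_1,p_0p_1\}$; you do not need Lemmas~\ref{lem-1} and~\ref{lem-2}. Now take $x=p_1$, $y=v$ and $d=1$. Any $r$ with $\di(r,u)<\di(r,v)$ gives $\di(r,p_1)=\di(r,v)$, so ``an $r$ distinguishing $y$ from $u$'' is the wrong selection rule. What closes the argument, as in the paper's Claim 2, is the oriented $r$. For the remaining case $x=p_0p_1$, $y=uv$, use a vertex of $\mathcal{R}$ separating $u$ from the edge $uv$ in $H$.
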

\proof
Write $d=\di_H(u,v)$.
We first claim that for any pair of vertices $a,b\in V(H)$,
$\di_H(a,b)=\di_G(a,b)$. Indeed, $\di_G(a,b)\le \di_H(a,b)$.
In $G$, if there exists a shortest path from $a$ to $b$
that avoids the internal vertices of $P$, then
$\di_G(a,b)=\di_H(a,b)$, as desired. Otherwise, every  shortest path from $a$ to $b$ must contain an internal vertex of $P$. Since $P$ is a thread, it follows that every shortest path from $a$ to $b$ must pass through all internal vertices of $P$, which implies any shortest path from $a$ to $b$ contains the entire $P$. Now, notice that $H$ is connected and $l\ge d$. It follows that $\di_G(a,b)\ge \di_H(a,b)$, and so we have $\di_G(a,b)=\di_H(a,b)$. We conclude that the above claim is valid.

Now, let $\mathcal{R}$ be a mixed metric generator of $H$ with   the smallest cardinality, that is $|\mathcal{R}|=\dm(H)$.
Without loss of generality, we may assume that there exists
$r\in \mathcal{R}$ such that $\di_H(r,u)\ge \di_H(r,v)$.
Write
$$A=\di_H(r,u),~~B=\di_H(r,v),~~\delta_r=A-B,$$
then $\delta_r\ge 0$.
Furthermore,  any element of $P$ can be represented as
$$
z=[a,b],~~b-a\in \{0,1\},~~0\le a \le b \le l.
$$
Note that $d=\di_H(u,v)$. It follows that
\begin{equation}\label{eq-1}
\di_G(u,z)=\min\{a,d+l-b\}
\end{equation}
and
\begin{equation}\label{eq-2}
\di_G(r,z)=\min\{A+a,B+l-b\}=B+\min\{\delta_r+a,l-b\}.
\end{equation}
Now let $m=\lceil \frac{l}{2}\rceil$ and let $w=p_m$. Then
\begin{equation}\label{eq-3}
\di_G(w,z)=\left\{
                                  \begin{array}{ll}
                                    m-b, & \hbox{if
                                    $b\le m$;}\\
                                    a-m, & \hbox{if $a\ge m$.}
                                  \end{array}
                                \right.
\end{equation}

{\bf Claim 1.} Any two distinct elements of $P$ are distinguished by some vertex of $\{w,u,r\}$.

We say that an element $[a,b]$ of $P$ lies on the left (resp. right) side of $w$ if $b\le m$ (resp. $a\ge m$).
Naturally, the element $[m,m]$ of $P$ lies on both the left and the right sides of $w$.
In the following, we consider two cases.

\medskip
\noindent {\bf Case 1.} The two distinct elements of $P$ lie on the same side of $w$.
\medskip

If the two distinct elements of $P$ lie on the left side of $w$, then by \eqref{eq-1} and \eqref{eq-3}, we see that the two distinct elements are distinguished by some vertex of $\{u,w\}$, as desired. Suppose now that the two distinct elements of $P$ lie on the right side of $w$, say $[a,b]$ and $[x,y]$. If $a\ne x$, then $w$ can distinguish them by \eqref{eq-3}. In the following, assume $a=x$. Then by \eqref{eq-2}, we have that
\begin{equation}\label{eq-4}
\di_G(r,[a,b])=B+\min\{\delta_r+a,l-b\},~~
\di_G(r,[x,y])=B+\min\{\delta_r+a,l-y\}.
\end{equation}
Since $y\ge a$, $b\ge a$, and $2a\ge l$, we have that $a\ge l-a\ge l-b$ and $a\ge l-y$. Note that $b\ne y$. It follows from \eqref{eq-4} that $[a,b]$ and $[x,y]$ are distinguished by $r$, as desired.

\medskip
\noindent {\bf Case 2.} The two distinct elements $[a,b]$ and $[x,y]$ of $P$ lie on opposite sides side of $w$.
\medskip

Without loss of generality, assume that $z_1=[a,b]$ lies on the left side of $w$ and $z_2=[x,y]$ lies on the right side of $w$.
Namely, $b\le m \le x$. From \eqref{eq-1}, it follows that
$\di_G(u,z_1)=\min\{a,d+l-b\}=a$ and $\di_G(u,z_2)=\min\{x,d+l-y\}$. If $a\ne \min\{x,d+l-y\}$, then $z_1$ and $z_2$ are distinguished by $u$, as desired. Thus, in the following, we assume that
\begin{equation}\label{eq-5}
a=\min\{x,d+l-y\}.
\end{equation}

Suppose that $a=x$. Then $z_1=[m,m]=w$ and $z_2=[m,m+1]$.
We deduce that $z_1$ and $z_2$ lie on the right side of $w$, and so Case 1 ends the proof.

We next suppose that $a<x$. If $b=y$, then $y=b\le m \le x \le y$, which implies that $z_1$ and $z_2$ lie on the left side of $w$, and so Case 1 completes our proof.  Thus, in the following, assume that $b\ne y$, that is, $y>b$.
Now, in view of \eqref{eq-2}, we have that
$$
\di_G(r,z_1)=B+\min\{\delta_r+a,l-b\},~~
\di_G(r,z_2)=B+\min\{\delta_r+x,l-y\}.
$$
It follows from \eqref{eq-5} that $a=d+l-y$. Since $d\ge 1$ and $\delta_r\ge 0$, we conclude that
$$l-y=a-d<a<x\le x+\delta_r.$$
As a result, we have $\di_G(r,z_2)=B+l-y$.
If $\di_G(r,z_1)=B+l-b$, then clearly, $\di_G(r,z_1)\ne \di_G(r,z_2)$, and so $z_1$ and $z_2$ are distinguished by $r$, as wanted. Otherwise, $\di_G(r,z_1)=B+\delta_r+a$. Consequently, we have
$$
B+\delta_r+a-(B+l-y)=\delta_r+a-(a-d)=\delta_r+d>0.
$$
This forces that $\di_G(r,z_1)\ne \di_G(r,z_2)$, which implies that $z_1$ and $z_2$ are distinguished by $r$, as required. Thus, Claim $1$ is valid.

{\bf Claim 2.} For any two elements $z\in O(P)\setminus O(H)$ and $z'\in O(H)$, there exists an element in $\{u,w\}\cup \mathcal{R}$ such that it can distinguish $z$ and $z'$.

Note that $P$ is a thread. It follows that
\begin{equation}\label{eq-6}
\di_G(w,z')=\min\{m+\di_H(u,z'),l-m+\di_H(v,z')\}\ge l-m.
\end{equation}
If $l=2k$ for some positive integer $k$, then
$\di_G(w,z)\le k-1$ and $\di_G(w,z')\ge k$, which implies that
$w$ distinguishes $z$ and $z'$, as required.

In the following, we assume that $l=2k+1\ge 3$ for some positive integer $k$, that is, $m=k+1$. Then by \eqref{eq-3} and \eqref{eq-6}, it is easy to see that if $z\notin \{[1,1],[0,1]\}$, then $z$ and $z'$ are distinguished by $w$.
In the following, we consider two cases.

\medskip
\noindent {\bf Case 1.} $z=[1,1]$.
\medskip

Indeed, we may assume that $w$ does not distinguish $z$ and $z'$. It follows that
$$
\di_G(w,z')=\di_G(w,z)=k.
$$
In view of \eqref{eq-6}, it can be inferred that $l-m+\di_H(v,z')=k+\di_H(v,z')=k$, and hence $\di_H(v,z')=0$.
Now, by \eqref{eq-2}, we have that
$$
\di_G(r,z)=B+\min\{\delta_r+1,l-1\}\ge B+1.
$$
However, because of $\di_H(v,z')=0$, we have
$\di_G(r,z')\le B$. It follows that $r$ distinguishes $z$ and $z'$, as desired.

\medskip
\noindent {\bf Case 2.} $z=[0,1]$.
\medskip

Clearly, we may assume that both $w$ and $u$ do not distinguish $z$ and $z'$. Therefore, we have
$\di_G(u,z')=\di_G(u,z)=0$, which means
$\di_G(u,z')=\di_H(u,z')=0$. Moreover, we also have
$\di_G(w,z')=\di_G(w,z)=k$ by \eqref{eq-3}.
Now
$$
\di_G(w,z')=\min\{k+1+\di_H(u,z'),k+\di_H(v,z')\}=k=k+\di_H(v,z')
$$
implies $\di_H(v,z')=0$. As a consequence,
$\di_H(v,z')=\di_H(u,z')=0$. We conclude that $z'=\{u,v\}\in E(G)$, that is, $\di_G(u,v)=\di_H(u,v)=1$. It follows that there exists a vertex $q\in \mathcal{R}$ such that $q$ can distinguish $u$ and $z'$. Consequently,
$\di_H(q,u)\ne \di_H(q,z')$, which forces $\di_H(q,z')=\di_H(q,v)$. Hence, we must have
$\di_H(q,v)<\di_H(q,u)$. Now
$$\di_G(q,z)=\min\{\di_H(q,u),\di_H(q,v)+l-1\}
>\di_H(q,v)=\di_H(q,z')=\di_G(q,z')$$
implies that $q$ distinguishes $z$ and $z'$, as desired.
Thus, Claim $2$ holds.

Now note that $\mathcal{R}$ is a mixed metric generator of $H$ with the smallest cardinality. Combining Claims $1$ and $2$, we have that $\mathcal{R}\cup \{u,w\}$ is a mixed metric generator of $G$. It follows that $\dm(G)\le \dm(H)+2$.
$\qed$

\begin{lemma}\label{lem-5}
Let $G$ be a $2$-connected graph which is not a cycle.
If every thread of $G$ is short, then $\mathcal{B}$ is a mixed metric generator of $G$.
\end{lemma}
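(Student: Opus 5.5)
The plan is to exploit the decomposition of $G$ into maximal threads, cited from \cite{Wh}. Every maximal thread has both endpoints in $\mathcal{B}$. Every element of $O(G)$ either is a vertex of $\mathcal{B}$ or is an internal vertex or an edge of some maximal thread. So it suffices to show that any two distinct elements $x,y\in O(G)$ are distinguished by an endpoint of a maximal thread containing one of them.

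\textbf{Step 1: distances inside a thread.} Fix a maximal thread $P=(p_0,\dots,p_l)$ with $u=p_0$, $v=p_l$, and set $H=G-P^0$. Since $P$ is short, $\di_H(u,v)>l$. Hence, for every element $[a,b]$ of $P$, going around through $H$ is never shorter, and
$$\di(u,[a,b])=a,\qquad \di(v,[a,b])=l-b.$$
Consequently $\di(u,z)+\di(v,z)$ equals $l$ when $z$ is a vertex of $P$, and $l-1$ when $z$ is an edge of $P$. Two distinct elements of the same thread are then distinguished by $u$ or $v$, because the pair $(a,l-b)$ determines $[a,b]$.

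\textbf{Step 2: elements in different threads.} Let $z\in O(P)$ and let $z'$ be an element not belonging to $P$. Suppose neither $u$ nor $v$ distinguishes them, so $\di(u,z')+\di(v,z')=\di(u,z)+\di(v,z)\le l$. I will aim for a contradiction with shortness.
\begin{itemize}
\item If $z'$ is a vertex, Lemma~\ref{lem-1} (applied from $u$ and, symmetrically, from $v$) shows that shortest paths from $u$ and $v$ to $z'$ avoid the interior of $P$. Concatenating them gives a $u,v$-path in $H$ of length at most $l$, contradicting $\di_H(u,v)>l$.
\item If $z'$ is an edge and $z$ is an edge of $P$, the sum is at most $l-1$. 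Lemma~\ref{lem-2} then gives a $u,v$-path in $H$ of length at most $l$, again a contradiction.
\end{itemize}
Vertices of $\mathcal{B}$ are handled inside this framework, since each is $p_0$ of some thread. A vertex $b\in\mathcal{B}$ is distinguished from every other vertex by $b$ itself.

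\textbf{Step 3: the main obstacle.} The remaining mixed case is $z$ a vertex of $P$ (sum $l$) and $z'$ an edge, not of $P$, with the same distances to $u$ and $v$. Here the concatenation only yields a path of length $l+1$, so shortness of $P$ gives no contradiction. My plan is to swap roles. The edge $z'$ lies on some maximal thread $Q$ with endpoints $u',v'\in\mathcal{B}$ and length $l_Q$, and by Step 1, $\di(u',z')+\di(v',z')=l_Q-1$. If $u'$ and $v'$ also fail to distinguish $z$ and $z'$, then $\di(u',z)+\di(v',z)=l_Q-1$, where $z$ is a vertex not interior to $Q$. If $z\notin V(Q)$, Lemma~\ref{lem-1} applied to $Q$ gives a $u',v'$-path in $G-Q^0$ of length at most $l_Q-1$, contradicting that $Q$ is short. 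If $z\in V(Q)$, then $z$ is an endpoint of $Q$ and has distance $0$ to itself, so it is distinguished from $z'$ by one of $u',v'$.

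I expect the careful bookkeeping here to be the delicate part. In particular, I must check the degenerate situations: $z$ equal to an endpoint $u$ or $v$ with $z'$ an incident edge, and threads sharing endpoints. I would also double-check that Lemma~\ref{lem-1} applies with the inequality $\le l_Q$ as stated.
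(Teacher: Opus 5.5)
Your proposal is correct and follows essentially the same route as the paper. The ingredients match: the same-thread case via the coordinates $(a,l-b)$, Lemma~\ref{lem-1} and Lemma~\ref{lem-2} for cross-thread vertex and edge--edge pairs, and, for a vertex--edge pair, switching to the endpoints of the thread $Q$ containing the edge, which is what the paper's terse ``Similarly, we can also obtain that $z$ is an edge'' means (you also fold the paper's separate Case~2 for $\mathcal{B}$ into Step~1 by letting $[a,b]$ include endpoints). The one loose spot is the subcase $z\in V(Q)$: ``distance $0$ to itself'' is not enough when $z'$ is incident to $z$, but there $z,z'\in O(Q)$, so Step~1 applies (equivalently, an endpoint has $\di(u',z)+\di(v',z)=l_Q\neq l_Q-1$).
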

\proof
Take distinct $z,w\in O(G)$. It suffices to show that there exists an element of $\mathcal{B}$ such that it can distinguish $z$ and $w$. We consider three cases as follows.

\medskip
\noindent {\bf Case 1.} $z$ and $w$ lie on the same thread, and $z,w\notin \mathcal{B}$.
\medskip

Take a maximal thread $P=(p_0,p_1,\ldots,p_l)$ of $G$, let $p_0=u$ and $p_l=v$. Then $u,v\in \mathcal{B}$ and
$l<\di_H(u,v)$, where $H=G-P^0$ is connected.
Now assume that $z,w\in O(P)\setminus\{u,v\}$. Note that
$$
\di_G(u,[i,i])=i,~~\di_G(v,[i,i])=l-i, ~~0\le i \le l,
$$
and
$$
\di_G(u,[j-1,j])=j-1,~~\di_G(v,[j-1,j])=l-j, ~~1\le j \le l.
$$
It follows that $z,w$ must be distinguished by some element of $\{u,v\}$, as desired.

\medskip
\noindent {\bf Case 2.} One $z$ and $w$ belongs to $\mathcal{B}$.
\medskip

Without loss of generality, let $w\in \mathcal{B}$.
If $z$ is a vertex or an edge which is not incident with $w$,
then $\di_G(w,z)>0$, which implies that $z$ and $w$ are  distinguished by $w$, as required.
Thus, in the following, we may assume that $z$ is an edge incident with $w$. Note that $\deg(w)\ge 3$.
Starting from $w$, we can obtain a maximal thread $Q$ containing $z$. Let $w'$ be the other endpoint of this maximal thread and let the length of $Q$ be $s$.
Then $w'\in \mathcal{B}$ and $s\le \di_{G-Q^0}(w,w')$. We conclude that
$$
\di_G(w',w)=s,~~\di_G(w',z)=s-1.
$$
As a result, $z$ and $w$ are distinguished by $w'$, as required.

\medskip
\noindent {\bf Case 3.} $z$ and $w$ lie on distinct threads, and $z,w\notin \mathcal{B}$.
\medskip

Let $z$ lies on the maximal thread $P=(p_0,p_1,\ldots,p_l)$ of $G$, and let $p_0=u, p_l=v$.
In the following, we may assume that both $u$ and $v$ do not distinguish $z$ and $w$. Namely,
\begin{equation}\label{eq-7}
\di_G(u,z)=\di_G(u,w),~~\di_G(v,z)=\di_G(v,w).
\end{equation}
As a result, we have that
\begin{equation}\label{eq-8}
\di_G(u,z)+\di_G(v,z)=\left\{
                                  \begin{array}{ll}
                                    l, & \hbox{if
                                    $z$ is a vertex;}\\
                                    l-1, & \hbox{if $z$ is an edge.}
                                  \end{array}
                                \right.
\end{equation}

Suppose, for contradiction, that $w$ is a vertex. Then by \eqref{eq-7} and \eqref{eq-8}, we deduce that $\di_G(u,w)+\di_G(v,w)=l$.
Note that $w\notin O(P)$.
Now Lemma~\ref{lem-1} implies that
$\di_{G-P^0}(u,v)\le l$,
this contradicts that $P$ is short.

We conclude that $w$ is an edge.
Similarly, we can also obtain that $z$ is an edge.
Applying \eqref{eq-7} and \eqref{eq-8} again,
we know that $\di_G(u,w)+\di_G(v,w)=l-1$.
It follows from Lemma~\ref{lem-2} that there exists a path in $G-P^0$ such that its length is at most $l$.
This forces $\di_{G-P^0}(u,v)\le l$,
which contradicts that $P$ is short.
Thus, one of $u$ and $v$ must distinguish $z$ and $w$, as wanted.

Now, combining the above three cases, we conclude that the required result is valid.
$\qed$

\section{Proof of the Main Result}
In this section, we will show Theorem~\ref{other} and confirm Conjecture~\ref{conj}. We begin by citing three known results.

\begin{thm}{\rm (\cite[Corollary 16]{Ch})}\label{leaf}
Let $G$ be a connected graph which is not a tree such that $\ell(G)\ge 1$. Then $\dm(G)\le \ell(G)+2c(G)$.
\end{thm}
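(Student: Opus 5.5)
Since Theorem~\ref{leaf} is quoted from \cite{Ch}, in the paper it will simply be invoked. If I had to prove it directly, I would argue by induction on $c(G)$, using the known values for trees and unicyclic graphs and the thread‑removal Lemma~\ref{lem-4} for the inductive step. I have not verified the reduction step below in general, so this is a sketch rather than a complete argument.

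\emph{Base case.} For $c(G)=1$, $G$ is unicyclic with $\ell(G)\ge 1$. The exact value of $\dm(G)$ for such graphs is determined in \cite{Sed21}. From it I would check directly that $\dm(G)\le \ell(G)+2$.

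\emph{Inductive step.} For $c(G)\ge 2$, I look for a thread $P=(p_0,\dots,p_l)$ with endpoints $u,v$ such that:
\begin{enumerate}[(i)]
\item $H=G-P^0$ is connected and is not a tree;
\item $l\ge \di_H(u,v)$;
\item $\ell(H)\le \ell(G)$.
\end{enumerate}
Then $c(H)=c(G)-1$ and $\ell(H)\ge 1$: the leaves of $G$ are not interior vertices of a thread lying on a cycle, so they survive in $H$. Lemma~\ref{lem-4} and the induction hypothesis give
$$\dm(G)\le \dm(H)+2\le \ell(H)+2c(H)+2\le \ell(G)+2c(G).$$

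\emph{Finding the thread.} To find $P$, I would work inside a non‑trivial $2$-connected block $K$ of $G$ and split into two cases.
\begin{itemize}
\item If $K$ is a cycle, it contains a cut‑vertex of $G$, because $G$ has leaves and $c(G)\ge 2$. I would take $P$ to be the longer of the two arcs of $K$ between two cut‑vertices, or the whole cycle minus one edge if $K$ has only one. Then $l\ge \di_H(u,v)$ holds automatically.
\item If $K$ is not a cycle, it decomposes into maximal threads \cite{Wh}. A maximal thread of $K$ whose interior avoids cut‑vertices of $G$ is a thread of $G$, and I would take such a thread that is not short. Removing it keeps both endpoints of degree at least $2$, so condition (iii) holds.
\end{itemize}

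\emph{Main obstacle.} The hard case is when every candidate thread is short, or when every thread's interior meets a cut‑vertex of $G$. In that situation I would imitate Lemma~\ref{lem-5}: take as a generator the set $\mathcal{B}\cap V(K)$ for the block $K$ together with the leaves. Combining Lemma~\ref{lem-3} with the degree count
$$\sum_v(\deg v-2)=2c(G)-2,$$
where each leaf contributes $-1$, should keep the size within $\ell(G)+2c(G)$. The delicate point is this count: branch vertices outside $K$ must not be paid for twice. I expect this bookkeeping, rather than the distance arguments, to be where the proof in \cite{Ch} does its real work.
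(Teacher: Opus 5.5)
The paper gives no proof of this statement. It is quoted from \cite[Corollary 16]{Ch} and invoked only in the closing paragraph to dispose of graphs with leaves, so your first sentence is exactly the paper's treatment. The direct argument you sketch, however, is not a proof, and it fails before the ``hard case'' you flag. The base case and your inequality chain given (i)--(iii) are fine, but the cycle-block bullet does not deliver (iii). If $K$ contains only one cut-vertex $u$ of $G$, your thread ends at a neighbour $v$ of $u$ with $\deg_G(v)=2$. Then $v$ becomes a leaf of $H$, and the chain only gives $\ell(G)+2c(G)+1$. No better choice exists in general. Take two triangles $xab$, $xcd$ sharing $x$, plus a pendant edge $xy$. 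The only threads $P$ with $H$ connected and $l\ge \di_H(u,v)$ are $(x,a,b)$, $(x,b,a)$ and their analogues in the other triangle, and each creates a new leaf. The missing ingredient here is an extra saving. In $H$ only $v$ distinguishes $u$ from the edge $\{u,v\}$, so $v$ lies in every mixed metric generator $\mathcal{R}$ of $H$. Running Lemma~\ref{lem-4} with the thread oriented from $v$ (any $r\in\mathcal{R}\setminus\{v\}$ has $\di_H(r,v)>\di_H(r,u)$) shows that $\mathcal{R}\cup\{w\}$ already suffices, i.e.\ $\dm(G)\le \dm(H)+1$, which pays for the new leaf. Similarly, if $K$ has three or more cut-vertices, the longer arc between two of them is in general not a thread. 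The arcs between consecutive cut-vertices can all be short (a $12$-cycle with three equally spaced cut-vertices), so ``$l\ge \di_H(u,v)$ holds automatically'' is false there.

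The remaining case is where the real content lies, and the generator you propose does not handle it. A set $(\mathcal{B}\cap V(K))\cup\{\text{leaves}\}$ built from one block ignores the other blocks. Join two copies of $K_4$ by a bridge $a_1a_2$ and attach a pendant vertex to $a_1$. Every thread is then short or a bridge. Whichever copy you take as $K$, the three vertices of the other copy different from its attachment vertex are equidistant from every vertex of your set. If instead you take all of $\mathcal{B}$, the degree count only gives $|\mathcal{B}|\le 2c(G)-2+\ell(G)$. So $|\mathcal{B}|+\ell(G)$ can exceed $\ell(G)+2c(G)$ once $\ell(G)\ge 3$, and branch vertices in pendant trees must be dropped. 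The distance argument of Lemma~\ref{lem-5} then has to be redone in the presence of cut-vertices and pendant trees. That lemma is proved only for $2$-connected $G$: it uses that every edge at a branch vertex lies on a maximal thread ending at another branch vertex. None of this is supplied, so what you have is a reduction scheme whose core is missing. For the purposes of this paper, the citation of \cite{Ch} is the proof.
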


\begin{thm}{\rm (\cite[Theorem 4.3]{Kek})}\label{tree}
Let $G$ be a tree. Then $\dm(G)=\ell(G)$.
\end{thm}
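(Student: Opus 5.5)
The equality $\dm(G)=\ell(G)$ will follow from a lower bound, showing that every leaf lies in every mixed metric generator, and an upper bound, showing that the set $L$ of all leaves is itself a mixed metric generator. Throughout we assume $G$ has at least two vertices, since $K_1$ is a degenerate case to be handled by convention.

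\textbf{Lower bound.} Let $x$ be a leaf with unique neighbour $y$, and put $e=\{x,y\}$. For any vertex $v\ne x$, every path from $v$ to $x$ passes through $y$. Hence $\di(v,e)=\min\{\di(v,x),\di(v,y)\}=\di(v,y)$, so $v$ does not distinguish the vertex $y$ from the edge $e$. The vertex $x$ does distinguish them, because $\di(x,e)=0\ne 1=\di(x,y)$. Therefore every mixed metric generator contains every leaf, and $\dm(G)\ge \ell(G)$.

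\textbf{Upper bound.} We show that $L$ distinguishes any two distinct elements $s,t\in O(G)$. The single tool is the following extension argument. In a tree, a walk that starts along an edge and never turns back can always be continued until it reaches a leaf, because the tree is finite and acyclic. Every vertex on such a continuation is strictly farther from everything left behind.

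The cases are handled as follows.
\begin{enumerate}
\item[(i)] \emph{Both $s,t$ are vertices.} Take the $s,t$-path and extend it beyond $t$ away from $s$ to a leaf $l$; if $t$ is itself a leaf, put $l=t$. Then $\di(l,s)=\di(l,t)+\di(s,t)>\di(l,t)$.
\item[(ii)] \emph{The elements are disjoint, with at least one an edge.} Let $t'$ be the endpoint of $t$ farthest from $s$, or $t$ itself if $t$ is a vertex. Extend the path from $s$ through $t'$ to a leaf $l$. Then $\di(l,t)=\di(l,t')$, while $\di(l,s)\ge \di(l,t')+1$ because every route from $l$ to $s$ passes through $t'$ and then at least one more step.
\item[(iii)] \emph{A vertex $y$ and an edge $\{y,z\}$ containing it.} Extend from $z$ away from $y$ to a leaf $l$ (possibly $l=z$). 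Then $\di(l,\{y,z\})=\di(l,z)<\di(l,z)+1=\di(l,y)$.
\item[(iv)] \emph{Two edges $\{y,z\}$ and $\{y,w\}$ sharing the vertex $y$.} Extend from $z$ away from $y$ to a leaf $l$. Then $\di(l,\{y,z\})=\di(l,z)$, while $\di(l,\{y,w\})=\di(l,y)=\di(l,z)+1$.
\end{enumerate}
In every case some leaf distinguishes $s$ and $t$, so $L$ is a mixed metric generator and $\dm(G)\le \ell(G)$.

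The step I expect to need the most care is the case split itself: making sure the overlapping configurations (iii) and (iv) are the only ways two distinct elements can fail to be separated by the generic argument (ii). I also need to check that the chosen extension direction always exists, including when the relevant endpoint is already a leaf. This holds because in a tree with at least two vertices, every vertex other than a leaf has a neighbour off any given edge.
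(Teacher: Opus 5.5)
The paper gives no proof of this statement. It is quoted from \cite[Theorem~4.3]{Kek} and used only in the last paragraph, to dispose of trees when confirming Conjecture~\ref{conj}. So there is no internal argument to compare against. Judged on its own, your proof is correct and complete, and it makes the tree case self-contained.

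\textbf{Lower bound.} Your argument is valid in every graph, not only in trees. If $x$ is a leaf with neighbour $y$, then $\di(v,\{x,y\})=\di(v,y)$ for every $v\ne x$, so $x$ lies in every mixed metric generator. This is why $\ell(G)$ is the natural leading term in Conjecture~\ref{conj}.

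\textbf{Upper bound.} Your four cases are exhaustive: vertex--vertex, vertex--edge with or without incidence, and edge--edge either sharing a vertex or disjoint. Two distinct edges of a simple graph share at most one vertex, so nothing is missed.

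The only step you leave implicit is in (ii) when $t=\{t',t''\}$ is an edge. Since $s$ is disjoint from $t$, it lies in the component of $G-t$ containing $t''$. Hence $t''$ lies on the path from $s$ to $t'$, and $t'$ is strictly the farther endpoint. After you extend beyond $t'$ to the leaf $l$, this gives $\di(l,t'')=\di(l,t')+1$, and therefore $\di(l,t)=\di(l,t')$. The same separation property gives $\di(l,s)\ge\di(l,t')+1$.

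\textbf{The case $K_1$.} No special convention is needed. $O(K_1)$ has a single element, so $\emptyset$ is vacuously a mixed metric generator and $\dm(K_1)=0=\ell(K_1)$.
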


\begin{thm}{\rm (\cite[Corollary 12(i)]{Ch})}\label{0-leaf}
Let $G$ be a connected graph with $\ell(G)=0$. If
$G$ has a cut-vertex, then $\dm(G)\le 2c(G)$.
\end{thm}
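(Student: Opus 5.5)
The statement immediately above is Theorem~\ref{0-leaf}, quoted from \cite{Ch}. I would prove it by induction on $c(G)$, combining the thread-removal Lemma~\ref{lem-4} with the degree count behind Lemma~\ref{lem-3}. Since $G$ has no leaves and a cut-vertex, it has at least two blocks, each containing a cycle, so $c(G)\ge 2$. I would work with the block--cut tree and, in particular, with the end blocks, i.e.\ blocks containing exactly one cut-vertex.

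\emph{Reduction step.} Suppose $G$ has a thread $P=(p_0,\ldots,p_l)$ with $u=p_0\ne v=p_l$ such that $H=G-P^0$ is connected, $l\ge \di_H(u,v)$, and $H$ still has no leaves. Then $c(H)=c(G)-1$, and Lemma~\ref{lem-4} gives $\dm(G)\le \dm(H)+2$. It then suffices to bound $\dm(H)\le 2c(H)$ in each of the following cases:
\begin{itemize}
\item if $H$ still has a cut-vertex, by induction;
\item if $H$ is $2$-connected and not a cycle, by the $2$-connected case (the main Theorem~\ref{other}, which is proved independently of this corollary);
\item if $H$ is a cycle, by the known value $\dm(C_n)=3\le 2c(G)-2$ when $c(G)\ge 3$; the few small exceptions would be checked directly.
\end{itemize}
If removing $P$ creates a leaf, namely when $u$ or $v$ had degree $3$, I would instead apply Theorem~\ref{leaf} to $H$. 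I expect this to cost at most one unit, which the count below has to absorb.

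\emph{Base case: every such thread is short.} Here I would adapt Lemma~\ref{lem-5}, using $\mathcal{B}$ as the generator. Its proof goes through verbatim for threads with two distinct ends in $\mathcal{B}$. The new feature is end blocks that are cycles through a single cut-vertex $x$: such a cycle is a closed thread, and $x$ alone cannot separate the symmetric elements on it. For each such end cycle I would add one further vertex, roughly antipodal to $x$, which distinguishes elements as in Claim~1 of Lemma~\ref{lem-4}. For the count, I would use
\[
\sum_v(\deg v-2)=2c(G)-2.
\]
Every end cycle forces its cut-vertex to have degree at least $4$, so that vertex contributes at least $2$ to the sum while accounting for only one element of $\mathcal{B}$. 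This leaves at least one unit of budget per end cycle for the extra vertex, giving $|\mathcal{B}|+\#\{\text{end cycles}\}\le 2c(G)-2$.

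The main obstacle I anticipate is the bookkeeping at cut-vertices. First, the Case~3 argument of Lemma~\ref{lem-5} (via Lemmas~\ref{lem-1} and~\ref{lem-2}) must survive when shortest paths pass through cut-vertices into other blocks. Second, the degree-$3$ endpoints in the reduction step may create leaves, and the extra units needed there must still fit under the $2c(G)$ budget. I would first try to choose $P$ inside an end block that is not a cycle, since there its removal never disconnects the graph.
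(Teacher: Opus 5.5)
\textbf{There is a genuine gap, and it is at the end blocks that are cycles.} Note that the paper does not prove this statement; it quotes it from Chakraborty, Foucaud and Hakanen, so your argument has to stand on its own.

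Let $C$ be an end cycle with cut vertex $x$. Every vertex $s\notin V(C)$ reaches $C$ only through $x$. So $\di(s,z)=\di(s,x)+\di(x,z)$ for every vertex or edge $z$ of $C$, and $s$ separates two elements of $C$ exactly when $x$ does. Hence, for any mixed metric generator $S$ of $G$, the set $(S\cap V(C))\cup\{x\}$ is a mixed metric generator of the cycle $C$. Since $\dm(C_n)=3$, $S$ must contain at least \emph{two} vertices of $C-x$.

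This breaks your base case in three ways:
\begin{enumerate}
\item $\mathcal{B}$ plus one vertex per end cycle is never a generator when an end cycle is present. For even $|C|$ the antipodal vertex lies on the reflection axis through $x$, so the two neighbours of $x$ on $C$ get identical distance vectors. Concretely, the figure-eight graph (two cycles sharing one vertex) needs at least $4$ vertices, while your set has $3$.
\item The degree count is false. The cut vertex of an end cycle can have degree $3$ (a cycle hanging from a bridge), and several end cycles can share a cut vertex. So $|\mathcal{B}|+e\le 2c(G)-2$, with $e$ the number of end cycles, already fails for the figure-eight ($3>2$).
\item Even after paying the correct price of two vertices per end cycle, keeping $\mathcal{B}$ overshoots $2c(G)$. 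Take $K_{1,3}$ with each leaf replaced by a pendant cycle. Then $c(G)=3$ and $|\mathcal{B}|=4$, and the six forced cycle vertices already exhaust the budget $2c(G)=6$. A generator meeting the bound can contain no vertex of $\mathcal{B}$ at all.
\end{enumerate}
The missing idea is to discard branch vertices whose role is simulated by vertices inside end cycles: a vertex of $C-x$ acts like $x$ on everything outside $C$. You then need a genuinely different budget, and that is the real content of the cited result.

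\textbf{The reduction step also needs repair.}
\begin{enumerate}
\item Deleting the interior of a thread whose distinct ends have degree at least $3$ never creates a leaf, since those degrees drop to at least $2$; your ``degree 3'' remark is mistaken. Leaves arise only from non-maximal threads, for example an arc of an end cycle.
\item In that leaf-creating case, Theorem~\ref{leaf} and Lemma~\ref{lem-4} give only $\dm(G)\le \ell(H)+2c(G)$ with $\ell(H)\in\{1,2\}$. This excess lives in a different branch of the induction from your base-case count, so the base case cannot absorb it.
\item Under your literal hypotheses, a graph with an end cycle never reaches the base case. An arc of length $l\ge |C|/2$ of an end cycle is never short, so end cycles are always routed into the failing leaf branch. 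If you instead reduce only along maximal threads with distinct ends, end cycles land in the base case, where the problems above apply.
\item Theorem~\ref{other} is not independent of this statement: its proof in the paper invokes Theorem~\ref{0-leaf}. You do not need it, though. If $u\neq v$ and $G-P^0$ is connected, then $P$ lies in a single block and every cut vertex of $G$ remains a cut vertex of $H$. So only your induction branch ever occurs, and in particular $H$ is never a cycle.
\end{enumerate}
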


We are now ready to prove Theorem~\ref{other}.

\medskip

{\noindent \em Proof of Theorem $\ref{other}$.}
Note that $G$ is not a cycle and $\ell(G)=0$. Then
$c(G)\ge 2$.
If every thread of $G$ is short, then by
Lemmas~\ref{lem-3} and \ref{lem-5}, we have that
$$\dm(G)\le |\mathcal{B}|\le 2c(G)-2\le 2c(G),$$ as desired.

In the following, we may assume that
$G$ has a maximal thread $P=(p_0,p_1,\ldots,p_l)$ such that
$\di_{G-P^0}(u,v) \le l$. Now let $H=G-P^0$. Clearly, $H$ is connected and has no leaves. It follows that
$$
c(H)=|E(H)|-|V(H)|+1=(|E(G)|-l)-(|V(G)|-(l-1))+1,
$$
that is,
\begin{equation}\label{meq-1}
c(G)=c(H)+1.
\end{equation}

Suppose that $H$ is a cycle. Then $c(G)=2$. Namely, $G$ is a graph in which $p_0$ and $p_l$ are the only vertices of degree $3$, while every other vertex has degree $2$.
It follows from \cite[Corollaries 8 and 11]{Sed} that
$\dm(G)\le 2c(G)$, as required.

In the following, suppose that $H$ is not a cycle.
Assume that $H$ has a cut-vertex. Theorem~\ref{0-leaf}
implies that $\dm(H)\le 2c(H)$.
Moreover, by Lemma~\ref{lem-4} and \eqref{meq-1}, we deduce that
$\dm(G)\le \dm(H)+2\le 2c(H)+2=2c(G)$,
as wanted.
Thus, next, we may assume that $H$ has no cut vertices. It follows that $H$ is $2$-connected.

Note that $H$ is a $2$-connected graph which is not a cycle. To complete the proof of this theorem, we now proceed by induction on $c(G)$.
If $c(G)=2$, then by \eqref{meq-1}, $H$ is a cycle, and so
the result follows from the above proof.

Now suppose that $c(G)\geq 3$, and assume that the assertion
holds for every graph $G'$ satisfying the hypotheses of the
theorem with $c(G')<c(G)$.
Note that $H$ satisfies the hypotheses of our
theorem. In view of \eqref{meq-1}, by the induction hypothesis, we have $\dm(H)\le 2c(H)$.
Moreover, by Lemma~\ref{lem-4}, we conclude that
$$
\dm(G)\le \dm(H)+2\le 2c(H)+2=2c(G).
$$
This completes the induction and the proof.
$\qed$

Note that a $2$-connected graph has no leaves.
Now, combining Theorems~\ref{leaf}, \ref{tree}, \ref{0-leaf}, and \ref{other},
we conclude that Conjecture~\ref{conj} holds for any connected graph which is not a cycle. Namely, Conjecture~\ref{conj} is valid.

\section*{Conflict of interest}
The authors state no conflict of interest.

\end{document}